\documentclass{amsart}[12pt]
\usepackage{amsmath,amsfonts,amssymb,latexsym,amsthm}
\usepackage{epsfig}
\usepackage{hyperref} 
\usepackage[dvipsnames]{xcolor}

\numberwithin{equation}{section}

\voffset=5mm
\oddsidemargin=15pt \evensidemargin=15pt
\headheight=9pt     \topmargin=-24pt
\textheight=649pt   \textwidth=433.pt

\newtheorem{thm}{Theorem}[section]
\newtheorem{lemma}[thm]{Lemma}

\newtheorem{prop}[thm]{Proposition}
\newtheorem{conj}[thm]{Conjecture}


\newcommand{\ns}{\normalsize}

\title[Concrete polytopes may not tile the space]
{Concrete polytopes may not tile the space}

\author[Alexey Garber and Igor Pak]{Alexey Garber$^\star$ and Igor Pak$^\diamond$}

\thanks{\today}

\thanks{\thinspace ${\hspace{-.45ex}}^\star$School of Mathematical \& Statistical Sciences,
University of Texas Rio Grande Valley, Brownsville, TX, 78520}

\thanks{\thinspace {\hspace{.45ex}}
Email:
\hskip.06cm
\texttt{alexeygarber@gmail.com}}

\thanks{\thinspace ${\hspace{-.45ex}}^\diamond$Department of Mathematics,
UCLA, Los Angeles, CA~90095.
\hskip.06cm
Email:
\hskip.06cm
\texttt{pak@math.ucla.edu}}









\def\zz{\mathbb Z}
\def\nn{\mathbb N}

\def\rr{\mathbb R}
\def\qqq{\mathbb Q}

\def\ov{\overline}

\def\Ga{\Gamma}

\def\de{\delta}

\def\al{\alpha}

\def\om{\omega}

\def\ve{\varepsilon}

\def\cP{\mathcal P}

\def\ssu{\subset}

\def\<{\langle}
\def\>{\rangle}

\def\0{{\mathbf 0}}

\def\SS{{\mathbb{S}}}
\def\DD{{\mathbb{D}}}

\def\conv{{\textrm{conv}}}

\def\.{\hskip.06cm}
\def\ts{\hskip.03cm}

\DeclareMathOperator{\vol}{vol}


\begin{document}





\begin{abstract}
Brandolini et al. conjectured in~\cite{BCRT}
that all \emph{concrete lattice polytopes} can multitile the space.
We disprove this conjecture in a strong form, by constructing
an infinite family of counterexamples in~$\rr^3$.
\end{abstract}

\maketitle

\section{Introduction}\label{s:into}
The study of \emph{integer points in convex polytopes} is so challenging
because it combines the analytic difficulty of number theory with
hardness of imagination typical to high dimensional geometry and the
computational complexity of integer programming.  Consequently,
whenever a new conjecture is posed it is a joyful occasion, as it suggests
an order in an otherwise disordered universe.  When a conjecture
is occasionally disproved, this adds another layer of mystery
to the subject.


In this paper we study the conjecture by Brandolini et~al.~\cite[Conj.~5]{BCRT}
that all concrete lattice polytopes can multitile the space. This
conjecture was restated and further investigated in~\cite[Conj.~8.6]{MR}
from a different point of view.
Here we disprove the conjecture by constructing a series
of explicit counterexamples. In fact, our main result is stronger
as it holds under more general notion of tileability.  Our tools
involve McMullen's theory of \emph{valuations of lattice polyhedra}
and \emph{Dehn's invariant}.  We conclude with final remarks and open problems.

\smallskip

A convex polytope $P \ssu \rr^d$ is called a \emph{lattice polytope}
if all its vertices are in $\zz^d$. Denote by
$$\om_P(x) \. := \frac{\vol\bigl(B_\ve(x)\cap P\bigr)}{\vol B_\ve(x)}
$$
the \emph{solid angle} at point~$x$, where $B_\ve(x)$ is a ball
of radius~$\ve$ centered at~$x$, and $\ve>0$ sufficiently small.

Define the \emph{$($regularised$)$ discrete volume} of $P$
as the sum of solid angles over all integer points:
$$
\chi(P) \. := \. \sum_{x\in P\cap \zz^d} \. \om_P(x)\ts,
$$
cf.~\cite{Bar,BR}.  \emph{Pick's theorem}
says that $\chi(P)=\vol(P)$ for all lattice polygons $P\ssu \rr^2$.
In an attempt to extend the theorem, Brandolini et~al.~\cite{BCRT} call
a lattice polytope \emph{concrete} if $\chi(P)=\vol(P)$.
They made the following curious conjecture.

\smallskip

\begin{conj}[{\cite{BCRT,MR}}]
Every concrete lattice polytope $P\ssu \rr^d$ \emph{multitiles}~$\rr^d$
with parallel translations and finitely many reflections.
\label{conj:BCRT}
\end{conj}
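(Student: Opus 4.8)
To disprove Conjecture~\ref{conj:BCRT} the plan is to produce, for infinitely many $n$, a concrete lattice polytope $P=P_n\ssu\rr^3$ whose \emph{Dehn invariant} $D(P)=\sum_{e}\ell(e)\otimes\theta(e)\in\rr\otimes_\qqq(\rr/\pi\qqq)$ --- the sum over edges of Euclidean edge length times dihedral angle --- is nonzero, and to observe that $D(P)\neq0$ obstructs every multitiling of the type allowed by the conjecture (indeed, more). We will use that $D$ is a valuation invariant under all rigid motions and that $D(\Pi)=0$ for every parallelepiped $\Pi$, since $\Pi$ is dissected and reassembled into a cube by translations alone.

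\smallskip
\noindent\textbf{Step 1: Dehn invariants obstruct multitiling.}
Suppose a convex polytope $P$ multitiles $\rr^3$ with multiplicity $k$ by translates of finitely many rigid-motion copies $T_1,\dots,T_m$ of~$P$. First reduce to the case where the multitiling is $\La$-periodic for a full-rank translation lattice $\La$, with $T_1,\dots,T_m$ the tiles meeting a fundamental parallelepiped $\Pi$ of~$\La$; for multitilings of $\rr^d$ by a convex polytope with translations and finitely many reflections this reduction can be carried out via the structure theory of such multitilers. Folding $\rr^3$ onto $\Pi$ along the $\La$-translates of $\partial\Pi$ turns the identity $\sum_{j,\,\la\in\La}\mathbf 1_{T_j+\la}\equiv k$ into an identity among finite sums of polytope indicator functions supported on~$\Pi$. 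Applying $D$ --- which by McMullen's valuation theory extends to a homomorphism on the group generated by polytope indicator functions, and kills translation --- yields $\sum_{j=1}^{m}D(T_j)=k\,D(\Pi)$. Each $T_j$ is a rigid-motion copy of $P$, so $D(T_j)=D(P)$, and $D(\Pi)=0$; hence $m\,D(P)=0$, i.e.\ $D(P)=0$ in the $\qqq$-vector space $\rr\otimes_\qqq(\rr/\pi\qqq)$. Thus any concrete lattice polytope with $D(P)\neq0$ refutes the conjecture; and, passing to the translation subgroup, the same argument rules out multitilings by arbitrary crystallographic families of rigid motions --- the strengthening promised in the abstract.

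\smallskip
\noindent\textbf{Step 2: concrete lattice polytopes with $D\neq0$.}
Two structural facts organize the construction. First, $c_1:=\chi-\vol$ is a valuation on lattice polytopes that vanishes in dimension $\leq2$ (a planar set has all solid angles $0$, so $\chi=0=\vol_3$; this is Pick's theorem), and consequently both $c_1$ and $D$ are additive under gluing along a common facet: if $P=A\cup B$ with $A\cap B$ a shared facet, then $c_1(P)=c_1(A)+c_1(B)$ and $D(P)=D(A)+D(B)$. Second, Girard's theorem writes each vertex solid angle as $\tfrac1{4\pi}$ times a signed sum of incident dihedral angles, and collecting terms shows that $P$ is concrete if and only if $\sum_{e}\ell_\zz(e)\,\theta(e)$ equals an explicit rational multiple of $\pi$ determined by the combinatorics of $P$, where $\ell_\zz(e)$ is the \emph{lattice} length of $e$. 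Take $A=\conv\{0,e_1,e_2,(1,1,n)\}$, a Reeve-type tetrahedron: its edges $0e_1$ and $0e_2$ have dihedral angle $\arctan n$, an irrational multiple of $\pi$ for $n\geq2$ by Niven's theorem. A short argument with $\qqq$-bases gives $D(A)\neq0$: the only edges of $A$ of Euclidean length $1$ are $0e_1$ and $0e_2$, while its other edge lengths $\sqrt2$, $\sqrt{n^2+1}$, $\sqrt{n^2+2}$ are $\qqq$-linearly independent of $1$, so the ``$1$-component'' of $D(A)$ equals $2\arctan n\neq0$ in $\rr/\pi\qqq$. Now glue a lattice ``corrector'' $B$ onto the facet $\conv\{0,e_1,e_2\}$, on the opposite side, keeping $A\cup B$ convex, and chosen so that $c_1(B)=-c_1(A)$ --- making $P=A\cup B$ concrete --- while the dihedral angles of $B$ along any length-$1$ edge are rational multiples of $\pi$, so that $D(P)=D(A)+D(B)$ still has $1$-component $\equiv2\arctan n\not\equiv0\pmod{\pi\qqq}$, i.e.\ $D(P)\neq0$. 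Letting $n$ vary produces the infinite family.

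\smallskip
\noindent\textbf{Main obstacle.}
Step~1 is soft; the real work is Step~2, and principally the corrector~$B$. A lattice polytope all of whose dihedral angles are rational multiples of $\pi$ automatically has $\chi\in\qqq$ (again by Girard), whereas $c_1(A)$ for a Reeve tetrahedron is a genuinely transcendental number; so $B$ cannot be a mere assembly of boxes, parallelepipeds, and rational-angle wedges. It must itself carry irrational dihedral angles, arranged to produce precisely the solid-angle defect $-c_1(A)$ while those angles either cancel in $D$ (occurring in supplementary pairs $\phi,\pi-\phi$ on equal-length edges) or lie on edge lengths $\qqq$-independent of $1$. Exhibiting such a $B$ as an honest convex lattice polytope glued to $A$, and then verifying the transcendental identity $c_1(A\cup B)=0$ together with the irrationality and $\qqq$-linear-independence statements behind $D(A\cup B)\neq0$, is the technical heart of the argument. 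A secondary point that must be handled carefully is the periodicity reduction invoked in Step~1 for multitilings by a convex polytope with translations and finitely many reflections; this is the only place where the precise class of admissible motions enters.
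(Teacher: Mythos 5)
Your overall strategy --- exhibit a concrete lattice polytope with nonzero Dehn invariant, and show that a nonzero Dehn invariant obstructs the allowed multitilings --- is the same as the paper's, but both of your steps have genuine gaps. In Step~1 you reduce to a lattice-periodic multitiling ``via the structure theory of such multitilers''; no such reduction is justified. A multitiling by translates of finitely many reflected copies of a convex polytope need not be periodic (one can shear slabs of a periodic one), and no structure theorem is cited, or known to us, that repairs this once reflections are allowed --- you yourself flag this as a point ``to be handled carefully,'' but it is exactly where your argument is open. The paper avoids periodicity entirely with the Debrunner--M\"urner asymptotic argument (Theorem~\ref{t:multitile}): take all tiles meeting a ball $B_R(O)$, use additivity of $D_f$ for a Kagan functional $f$ to get a contribution $\Theta(R^3)\ts D_f(P)$, and observe that the invariant of the union depends only on a boundary layer, hence is $O(R^2)$. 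That argument needs no periodicity and even rules out multitilings by arbitrary congruent copies, which is the stronger form proved in the paper.

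The more serious gap is Step~2: the corrector $B$ is never constructed, and you correctly identify its construction as the technical heart. You need a convex lattice polytope $B$, glued to $A$ along the single unit triangle $\conv\{0,e_1,e_2\}$ with $A\cup B$ convex, whose solid-angle defect equals exactly the transcendental-looking number $-(\chi(A)-\vol(A))$, while its dihedral angles along length-$1$ edges lie in $\pi\mathbb{Q}$; nothing in the proposal shows such a $B$ exists, and the convexity constraint on gluing along that one small facet is severely restrictive, so as written the existence of $B$ is an assertion rather than a proof. The paper resolves precisely this tension by a different mechanism: it proves that both $\de=\chi-\vol$ and $D_f$ are Minkowski-linear (Lemmas~\ref{l:defect} and~\ref{l:Dehn}, via oddness of the Ehrhart and solid-angle polynomials and McMullen's homogeneous decomposition), and then takes the Minkowski combination $P=5\ts T_1+12\ts T_2+19\ts T_3$ of a regular tetrahedron, a standard corner tetrahedron, and an orthoscheme (Proposition~\ref{p:ex}). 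The orthoscheme plays the role of your corrector: it tiles a lattice cube, so $D_f(T_3)=0$, yet $\de(T_3)=\frac23\neq 0$ because the reflections in that cube tiling do not preserve the lattice; this decouples the two invariants and lets an integer combination kill $\de$ without killing $D_f$. Unless you supply a comparable mechanism, Step~2 does not go through. (A minor point: concreteness is equivalent to $\sum_e \ell_{\mathbb{Z}}(e)\,\theta(e)$ being a specific rational multiple of $\pi$ determined by the volume and the lattice-point data of $P$, not by its combinatorics alone.)
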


\smallskip

Here we say that $P$ \emph{multitiles}~$\rr^d$ if there is an integer
$k\ge 1$ and an
infinite family $\cP$ of congruent copies of~$P$ such that every
generic point $x\in \rr^d$ belongs to exactly $k$ polytopes in~$\cP$,
see~\cite{GRS}.
For $k=1$ this is the usual tiling of the space, see e.g.~\cite{GS}.
In the conjecture, only $P'\in \cP$ are allowed if they are obtained
from $P$ with parallel translations and finitely many reflections.
We disprove a stronger claim in the main result of this paper:

\smallskip

\begin{thm} There is a concrete lattice polytope $P\ssu \rr^3$
which does not multitile $\rr^3$.  Moreover, for all~$N$,
there is such a polytope $P$ with more than $N$ vertices.
\label{t:main}
\end{thm}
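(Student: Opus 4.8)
The plan is to exhibit an explicit family of lattice polytopes in $\rr^3$ that are \emph{concrete} (so that $\chi(P) = \vol(P)$) but whose \emph{Dehn invariant} is nonzero, and then invoke the known obstruction: any polytope that multitiles $\rr^3$ must have Dehn invariant zero. The Dehn invariant lives in $\rr \otimes_{\qqq} (\rr/\qqq\pi)$ and is additive under subdivision and invariant under translations and reflections; a standard argument (essentially the fact that the cube has zero Dehn invariant together with additivity over a multitiling of a large box, up to boundary corrections which are lower-dimensional) shows that a $k$-fold multitiler of $\rr^3$ by congruent copies of $P$ forces $k \cdot \De(P) = 0$ in the $\qqq$-vector space $\rr \otimes_\qqq (\rr/\qqq\pi)$, hence $\De(P) = 0$. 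So the whole theorem reduces to constructing concrete polytopes with $\De(P) \neq 0$, and to getting arbitrarily many vertices.

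\smallskip

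First I would recall McMullen's valuation theory for lattice polyhedra and use it to understand concreteness: the map $P \mapsto \chi(P) - \vol(P)$ is a translation-invariant valuation on lattice polytopes, and it can be computed by summing local contributions supported on the faces. For a lattice polytope, the defect $\chi(P) - \vol(P)$ is a weighted sum over the faces $F$ of quantities depending only on the affine lattice along $F$ and the solid angle of the normal cone; in particular, if along every face (edge and vertex) these local angle-deficiencies cancel, then $P$ is concrete. The key construction is to take a lattice polytope each of whose \emph{edges} either is ``rational'' in the sense that its dihedral angle is a rational multiple of $\pi$ (contributing a torsion, hence zero, element to the Dehn invariant after tensoring), while arranging simultaneously that the local lattice-angle contributions to $\chi - \vol$ vanish. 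The cleanest way to do this is to glue together copies of a fundamental building block — e.g., suitable lattice simplices or ``wedges'' — along facets so that: (a) interior edges created by gluing have dihedral angles summing to $2\pi$ or $\pi$ and contribute nothing; (b) the concreteness defect, being a valuation, adds up from the pieces, so if each piece is concrete (or the defects telescope) the union is concrete; and (c) at least one surviving edge has an irrational dihedral angle with an irrational edge length, forcing $\De(P) \neq 0$.

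\smallskip

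Concretely, I expect the building block to be a lattice simplex $T$ with one ``bad'' dihedral angle $\theta$ that is an irrational multiple of $\pi$ along an edge of irrational length $\ell$, so that $\ell \otimes \theta \neq 0$ in $\rr \otimes_\qqq (\rr/\qqq\pi)$; for instance a simplex closely related to the classical ``orthoscheme'' whose Dehn invariant is known to be nonzero. One then checks, via a direct local computation with the lattice $\zz^3$, that $T$ can be made concrete by an appropriate choice of vertices, or that two reflected copies of $T$ glue to a concrete polytope. To get more than $N$ vertices, stack $N$ such blocks in a row (translating by a lattice vector along a direction transverse to the bad edge): the union $P_N$ is still a lattice polytope, it is still concrete because concreteness is a valuation and the newly created internal facets contribute zero (their two solid-angle contributions from the two sides sum correctly, and the internal edges have dihedral angle $\pi$), the number of vertices grows linearly in $N$, and the Dehn invariant picks up $N$ copies of $\ell \otimes \theta$, which is still nonzero (an integer multiple of a nonzero element in a $\qqq$-vector space). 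Hence $P_N$ is concrete, has more than $N$ vertices for $N$ large, and cannot multitile $\rr^3$.

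\smallskip

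The main obstacle is step (b)–(c) in combination: ensuring that the \emph{same} polytope is simultaneously concrete (a lattice-geometric / number-theoretic condition on the local angles at every face, phrased through McMullen's valuations) and Dehn-nontrivial (a condition on dihedral angles and edge lengths forcing an irrational-times-irrational term to survive tensoring over $\qqq$). These two conditions pull in opposite directions — concreteness wants angle contributions to cancel, while Dehn-nontriviality wants a rational dependence \emph{not} to hold — so the real work is to find a lattice realization where the arithmetic of the lattice angles at each face kills $\chi - \vol$ while the Euclidean dihedral angle at one edge remains rationally independent of $\pi$ (with an edge length rationally independent of what the cancellation forces). Verifying concreteness will require carefully setting up the face-by-face valuation formula and checking the cancellation; verifying $\De(P_N) \neq 0$ will require a short linear-algebra-over-$\qqq$ argument isolating the contribution of the bad edge from all the others.
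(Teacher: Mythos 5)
Your overall strategy is the same as the paper's: show that a multitiler of $\rr^3$ must have vanishing Dehn invariant, and then exhibit a concrete lattice polytope with nonzero Dehn invariant. That reduction is fine. The genuine gap is that you never actually produce such a polytope: the step ``one then checks, via a direct local computation, that $T$ can be made concrete by an appropriate choice of vertices, or that two reflected copies of $T$ glue to a concrete polytope'' is precisely the hard part, and you yourself note that concreteness and Dehn-nontriviality pull in opposite directions. The paper resolves this tension not by gluing but by \emph{Minkowski addition}: it proves that both the volume defect $\de(P)=\chi(P)-\vol(P)$ and the Dehn functionals $D_f$ are Minkowski linear (via McMullen's homogeneous decomposition and the odd-polynomial property of the solid-angle sum), computes $\de$ and $D_f$ for three explicit lattice tetrahedra (regular, ``standard'', and an orthoscheme that tiles a lattice cube, hence has $D_f=0$ but $\de=\tfrac23\ne 0$), and then finds an integer combination $P=5T_1+12T_2+19T_3$ in which the irrational and rational parts of the defect cancel while the Dehn contributions do not. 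Without some device of this kind (or an explicit verified example), your proposal is a plan, not a proof, of the existence statement.

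The second gap concerns the ``more than $N$ vertices'' part. Stacking $N$ translated blocks glued along facets either produces a non-convex body --- which does not address the conjecture, since it quantifies over (convex) lattice polytopes --- or, if the union happens to be convex (e.g.\ stacking prisms), the vertex count does \emph{not} grow: translates glued in a row form a longer prism with the same number of vertices. So your claim that the number of vertices grows linearly in $N$ fails exactly in the convex case you need. The paper instead takes the Minkowski sum $P+Q$ with a lattice zonotope $Q$ having at least $N$ vertices: zonotopes satisfy $\de(Q)=0$ and $D_f(Q)=0$, so by the two linearity lemmas $P+Q$ is still concrete with $D_f(P+Q)=D_f(P)\ne 0$, and $P+Q$ is convex with at least $N$ vertices. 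You would need either this Minkowski-sum argument or a different convexity-preserving construction to complete that part. A smaller point: your appeal to additivity over a multitiling of a large box needs the quantitative asymptotic form (boundary terms $O(R^2)$ against $\Theta(R^3)$ copies), which the paper carries out with Kagan functions; as sketched, your ``up to boundary corrections which are lower-dimensional'' is acceptable in spirit but should be made precise in the same way.
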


\medskip

\section{Brief background and the countexample idea}\label{s:back}

Let us first expound on the background and the motivation
behind the conjecture.  The problem of classifying polytopes
which can tile (tesselate) the space is classical.  It goes
back to the works of F\"edorov, Minkowski, Voronoi, Delone and Alexandrov,
and was featured in \emph{Hilbert's 18-th Problem}, see~\cite{GS}.
For tilings with parallel translations much more is known;
notably that in $\rr^3$ all such polytopes must be
\emph{zonotopes} (polytopes with centrally-symmetric faces of
all dimensions).  In higher dimensions, or for larger discrete
groups of translations and reflections, other polytopes appear
to tile the space, e.g.\ the \emph{24-cell} in~$\rr^4$.

For the lattice polytopes, the tilings are also heavily constrained and
can be studied using analytic tools~\cite{BCRT,GRS}.  The notion
of \emph{multitiling} goes back to Furtw\"angler (1936), and
many classical tiling results extend to this setting~\cite{GKRS}.
It is known and easy to see that if a lattice polytope multitiles
the space with parallel translations then it is
concrete~\cite{BCRT} (see below).  In particular,
all lattice zonotopes multitile the space~\cite{GRS},
and they are concrete because they can be partitioned
into parallelepipeds, see e.g.~\cite[$\S$7]{BP} and~\cite[Ch.~7]{Zie}.
The conjecture can then be viewed as an attempt to say that
the class of concrete lattice polytopes is very small and
can be characterized via the large body of work towards
characterization of tilings and multitilings.

From this point on, we restrict ourselves to convex polytopes
$P\ssu \rr^3$. For the clarity, observe that $\om_P(x)= 1$ for $x$
in relative interior of~$P$, and $\om_P(x)=\frac12$ for $x$
in relative interior of a face. Similarly,
$\om_P(x)=\al(e)$ for $x$ in relative interior of an edge~$e$ of~$P$,
where $\al(e)$ is the dihedral angle at~$e$, and $\om_P(x)$
is the usual solid angle for a vertex~$x$ of~$P$.

There is a way to understand both the conjecture and
our theorem as part of the same asymptotic argument.
For a polytope $P\ssu \rr^3$, define the (\emph{lattice})
\emph{volume defect} by
$$
\de(P) \. :=\. \chi(P) \. - \. \vol(P)\, \in \rr\ts.
$$
Similarly, the \emph{Dehn invariant} is given by
$$
\DD(P) \. := \. \sum_{e  \in E(P)} \. \ell(e) \otimes \al(e) \, \in \, \rr \. \otimes_{\zz} \. \bigl(\rr/\pi\zz\bigr)\ts,
$$
where $E(P)$ is the set of edges in~$P$, $\ell(e)$ is the length of edge~$e$,
and $\al(e)$ is the dihedral angle at~$e$, see e.g.~\cite{Bol,Dup}.

\smallskip

\begin{thm}
Let $P\ssu \rr^3$ be a convex polytope which multitiles
the space.  Then \ts $\DD(P)=0$.  Similarly, let $P$ be a
lattice convex polytope which multitiles the space
with parallel translations. Then \ts $\de(P)=0$, i.e.~$P$ is concrete.
\label{t:multitile}
\end{thm}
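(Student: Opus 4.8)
\smallskip

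\noindent\emph{Proof proposal.}
The plan is to prove the two assertions separately. Both rest on the identity, valid for any multitiling $\cP$ of multiplicity $k$, that integrating $\sum_{P'\in\cP}\mathbf 1_{P'}\equiv k$ over a small ball centred at $p\in\rr^3$ and dividing by its volume gives $\sum_{P'\in\cP}\om_{P'}(p)=k$; in particular $\cP$ is locally finite. Note also that every $P'\in\cP$ is congruent to $P$, so $\DD(P')=\DD(P)$, and the dihedral angles of $P'$ form the same finite set $D\ssu(0,\pi)$, with $\al_{\min}:=\min D>0$. For the \textbf{Dehn invariant} I would run a local cancellation argument on large cubes $Q_L=[0,L]^3$. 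Put $\cT_L:=\{P'\in\cP: P'\cap Q_L\ne\emptyset\}$, a finite set with $n_L:=|\cT_L|\sim kL^3/\vol(P)$, and subdivide all tiles in $\cT_L$ by their mutual intersections into a polytopal complex whose $1$-cells are relatively open segments $\sigma$ along whose relative interior the family of tiles containing $\sigma$ is constant, with no tile having a vertex there. Splitting $\DD(P')=\sum_{e}\ell(e)\otimes\al_{P'}(e)$ along these cells yields
$$
n_L\ts\DD(P)\. =\. \sum_{P'\in\cT_L}\DD(P')\. =\. \sum_{\sigma}\ell(\sigma)\otimes\Theta_L(\sigma)\ts,
$$
where $\Theta_L(\sigma)$ is the sum, over those $P'\in\cT_L$ having an edge that contains $\sigma$, of the dihedral angle of $P'$ along that edge. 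The crucial step is that $\Theta_L(\sigma)\equiv 0\pmod{\pi}$ whenever $\sigma$ lies deeper than $\operatorname{diam}P$ inside $Q_L$: for $p\in\relint\sigma$ every tile through $p$ belongs to $\cT_L$ and contributes to $\sum_{P'}\om_{P'}(p)=k$ one of $1$ (interior of a tile), $\tfrac12$ (relative interior of a facet), or $\al_{P'}(\sigma)/2\pi$ (relative interior of the edge carrying $\sigma$); multiplying by $2\pi$ and reducing modulo $\pi$ annihilates the first two kinds and leaves $\Theta_L(\sigma)\equiv 0$. The same inequality bounds by $M:=\lceil 2\pi k/\al_{\min}\rceil$ the number of tiles meeting a given $\sigma$ along an edge, so every $\Theta_L(\sigma)$ is a sum of at most $M$ elements of $D$ and hence takes values in a finite set $\Theta\ssu\rr/\pi\zz$ depending only on $P$ and $k$.

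\smallskip

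\noindent It follows that only the segments $\sigma$ within $\operatorname{diam}P$ of $\partial Q_L$ survive in the displayed sum, and their total length is $O(L^2)$ — they lie in a boundary shell that meets $O(L^2)$ tiles, each of bounded total edge length. Grouping by value, $n_L\ts\DD(P)=\sum_{\vt\in\Theta}\la_\vt\otimes\vt$ with $0\le\la_\vt=O(L^2)$. Since $\rr\otimes_\zz(\rr/\pi\zz)$ is an $\rr$-vector space — its torsion part $\rr\otimes_\zz(\qqq/\zz)$ vanishes — we may divide by $n_L$: then $\DD(P)=\sum_{\vt\in\Theta}(\la_\vt/n_L)\otimes\vt$ lies in the \emph{finite-dimensional} subspace spanned by the elements $1\otimes\vt$, $\vt\in\Theta$, and letting $L\to\infty$, where each coefficient $\la_\vt/n_L=O(L^2)/n_L\to 0$, forces $\DD(P)=0$.

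\smallskip

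\noindent For the \textbf{volume defect} I would use that $P$ is a lattice polytope, so $\chi(tP)$ is a polynomial in $t\in\nn$ by Macdonald's theorem, and hence so is $\de(tP)=\chi(tP)-t^3\vol(P)$; it therefore suffices to show $\de(sP)=0$ for $s$ ranging over an infinite set. Since $sP$ multitiles $\rr^3$ by translations for every $s\ge 1$, I would invoke the structure theory of translational multitilings~\cite{GRS,GKRS}, by which a rational polytope multitiling $\rr^3$ by translations does so by a lattice: say $P$ multitiles by a lattice $\Lambda\ssu\qqq^3$, and fix an integer $N\ge1$ with $N\Lambda\ssu\zz^3$. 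Then for each $t\ge1$ the lattice polytope $R:=NtP$ multitiles by $\Gamma:=Nt\Lambda\ssu\zz^3$, so $\sum_{\ga\in\Gamma}\mathbf 1_{R+\ga}$ is constant; taking Fourier transforms of this relation and using Poisson summation over $\Gamma$ gives $\widehat{\mathbf 1_R}(\mu)=0$ for all $\mu\in\Gamma^{\ast}\sm\{0\}$, and $\Gamma\ssu\zz^3$ forces $\zz^3\ssu\Gamma^{\ast}$, so $\widehat{\mathbf 1_R}(\xi)=0$ for every $\xi\in\zz^3\sm\{0\}$. Hence the $\zz^3$-periodization $\psi(x)=\sum_{y\in\zz^3}\mathbf 1_R(x+y)$ has all nonconstant Fourier coefficients zero and equals a.e.\ the constant $\widehat{\mathbf 1_R}(0)=\vol(R)$; on the other hand the average of $\psi$ over $B_\ve(0)$ tends, as $\ve\to0$, to $\sum_{x\in\zz^3}\om_R(x)=\chi(R)$, so $\chi(R)=\vol(R)$, i.e.\ $\de(NtP)=0$ for all $t\ge1$. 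Thus the polynomial $s\mapsto\de(sP)$ vanishes along $N,2N,3N,\dots$ and so vanishes identically; in particular $\de(P)=0$, i.e.\ $P$ is concrete.

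\smallskip

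\noindent The \textbf{main obstacle} I expect is organisational rather than conceptual. In the Dehn part the care lies in the combinatorics of the common subdivision of $\cT_L$ and, crucially, in securing the \emph{uniform} bound $M$ on how many tiles can share an edge along a segment — without it the terms $\la_\vt\otimes\vt$ would not lie in one finite-dimensional subspace and ``dividing by $n_L$'' in the topology-free group $\rr\otimes_\zz(\rr/\pi\zz)$ would be meaningless. (A direct local argument has the advantage of not needing the multitiling to be periodic, which it need not be for general congruences.) In the concreteness part the substantive inputs are imported: the structure theorem that a rational translational multitiler admits a lattice multitiling~\cite{GRS,GKRS}, and the standard identification of the solid-angle sum of a lattice polytope with the Abel-summed $\sum_{\xi\in\zz^3}\widehat{\mathbf 1_R}(\xi)$ (Macdonald--Poisson); the remaining estimates, such as $n_L\sim kL^3/\vol(P)$ and the shell bounds, are routine.
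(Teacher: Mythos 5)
Your proposal is correct in substance, but it is worth separating the two halves, because they sit differently relative to the paper's own proof. The paper handles both claims with one asymptotic scheme: $\de$ and $\DD$ are additive, so the region $\Ga$ covered by the $\Theta(R^3)$ tiles meeting a ball $B_R$ has $\de(\Ga)=\Theta(R^3)\ts\de(P)$, resp.\ $\DD(\Ga)=\Theta(R^3)\ts\DD(P)$, while both quantities depend only on $\partial\ts\Ga$ and hence are $O(R^2)$ (for the Dehn invariant this is made quantitative by composing with Kagan functions, i.e.\ real-valued homomorphisms on $\rr\otimes_{\zz}(\rr/\pi\zz)$). Your Dehn argument is the same boundary-cancellation idea, but implemented locally and finished by a different device: you stay inside $\rr\otimes_{\zz}(\rr/\pi\zz)$, use the angle identity $\sum_{P'}\om_{P'}(p)=k$ at points of edge segments to kill all interior contributions modulo $\pi$, and -- this is the genuinely useful extra care -- use the uniform bound $M$ on the number of tiles sharing an edge segment to confine the surviving boundary terms to the fixed finite-dimensional span of $1\otimes\vartheta$, $\vartheta\in\Theta$, where dividing by $n_L$ and letting $L\to\infty$ is legitimate (coordinates in a fixed basis are constant yet tend to $0$). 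That buys a self-contained proof which never extends $\DD$ to non-convex unions and never chooses an additive $f$; the paper's route is shorter but leaves exactly these points as routine. Your volume-defect argument is a genuinely different route: the paper simply reruns the asymptotic argument, implicitly using $\de(P')=\de(P)$ for all translates occurring in the multitiling (see their footnote), whereas you tame the arbitrary translation multiset by the structure theory of \cite{GRS,GKRS}, prove $\chi(R)=\vol(R)$ for lattice multitilers $R$ by periodization/Fourier (this is the ``known and easy'' direction from \cite{BCRT}), and recover $\de(P)=0$ from Macdonald polynomiality of $t\mapsto\de(tP)$ -- a clean reduction the paper does not use.

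The one step you must pin down is the imported structure theorem, since it carries the whole weight of passing from arbitrary translations to lattice translations. What your Fourier step needs is not merely ``$P$ multitiles with respect to \emph{some} lattice'' but a \emph{rational} lattice $\La\ssu\qqq^3$, so that $N\La\ssu\zz^3$ and hence $\zz^3\ssu\Gamma^{\ast}$; for an irrational $\La$ the vanishing of $\widehat{\mathbf 1_R}$ on $\Gamma^{\ast}\sm\{0\}$ says nothing about integer frequencies. This should indeed be extractable from \cite{GRS} (a translational $k$-tiler is centrally symmetric with centrally symmetric facets, and a \emph{rational} polytope with these properties multitiles with respect to a lattice) together with \cite{GKRS} in $\rr^3$, but your citation as stated elides the rationality of the lattice, and that is precisely the crux; everything else in that half (polynomiality of $\de(tP)$, $n_L\sim kL^3/\vol(P)$, the shell bounds, the unfolding of Fourier coefficients and the $\ve$-ball limit giving $\chi(R)$) is correct and routine.
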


\smallskip

Versions of this result and its various generalizations have been
repeatedly rediscovered, often with the same asymptotic argument
which goes back to Debrunner (1980) and M\"urner (1975).  We refer
to~\cite{LM} for generalizations to higher dimensions and further
references (see also~$\S$\ref{ss:finrem-asy}).

\smallskip

\begin{proof}[Proof outline] First, suppose the multitiling is the
usual tiling. Let $\cP$ be the set of copies of $P$
which define the usual tiling of~$\rr^3$, and let $\cP_R\ssu \cP$
be the set of copies of $P$ which intersect a ball $B_R(O)$
of radius~$R$ around the origin. Denote by $\Ga\ssu \rr^3$ the
region covered with tiles in~$\cP_R$. On the one hand, both
the volume defect and the Dehn invariant are additive, so
\ts $\de(\Ga) = |\cP_R| \.\de(P) = \Theta(R^3)\.\de(P)$, and \ts
$\DD(\Ga) = |\cP_R| \.\DD(P) = \Theta(R^3)\.\DD(P)$.  On the other hand,
both the volume defect and the Dehn invariant depend only on the boundary
$\partial\ts\Ga$, which is within a constant distance from $\partial B_R(O)$.
Thus both grow at most quadratically: \ts $\de(\Ga)$,
$\DD(P) = O(R^2)$.  For the defect this is clear, and for the Dehn invariant 
this can be made precise by using Kagan functions~$f$ (see below), 
which extend to ring homomorphisms \ts 
$\rr \. \otimes_{\zz} \. \bigl(\rr/\pi\zz\bigr) \to \rr$, 
so the resulting function of~$R$ can then be viewed analytically. 
Comparing the lower and upper asymptotic bounds, this implies 
the result.  \ts For general
$k$-tilings, the same argument work verbatim, as the changes 
are straightforward.\footnote{We are
implicitly using the fact that $\de(P)=\de(P')$ for all copies of $P'\in \cP$.
This is not true when reflections are allowed, see~$\S$\ref{ss:finrem-ortho}.}
\end{proof}

\smallskip

Now, the idea of a counterexample to Conjecture~\ref{conj:BCRT} is very clear.
We use the lattice valuation theory to construct a lattice polytope
$P\ssu \rr^3$ whose volume defect \ts $\de(P)=0$, while the Dehn invariant
$\DD(P)\ne 0$.  By Theorem~\ref{t:multitile}, this implies that $P$ cannot
multitile the space.

\bigskip

\section{Minkowski additivity}\label{s:Mink}

\subsection{Volume defect}
By definition, the volume defect $\de(P)$ is a translation invariant valuation
on lattice polytopes, i.e. $\de(P+x)=\de(P)$ \ts for all $x\in \zz^3$, and
$$
\de(P\cup Q) + \de(P\cap Q) \, = \, \de(P)+\de(Q)\ts,
$$
for all lattice polytopes $P, Q\ssu \rr^3$.  In particular, the volume defect
is additive under disjoint union (except at the boundary).  We also
need the following linearity property under \emph{Minkowski addition} \ts
$P+Q = \{x+y \. |\. x\in P, y\in Q\}$ \ts and \emph{expansion} \ts
$c\ts P = \{c\ts x\. |\. x\in P\}$.

\begin{lemma}[see $\S$\ref{ss:finrem-lemma}]
Let $P_1,\ldots,P_k$ be lattice convex polytopes in~$\rr^3$, and
let $t_1,\ldots,t_k \in \nn$.
Then
$$
\de\bigl(t_1 P_1 \.+\. \ldots \.+\. t_kP_k\bigr) \, = \,
t_1\ts \de(P_1) \.+\. \ldots \.+\. t_k \ts \de(P_k)\ts.
$$
\label{l:defect}
\end{lemma}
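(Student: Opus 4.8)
The plan is to combine McMullen's polynomiality theory for translation-invariant lattice valuations with Macdonald's reciprocity for solid-angle sums, reducing the lemma to the elementary fact that a polynomial which is linear along every integer ray through the origin must be a linear form. First, I would use that $\de$ is a translation-invariant valuation on lattice polytopes in $\rr^3$ (as recorded above): by McMullen's theory of lattice valuations (see $\S$\ref{ss:finrem-lemma}) this forces the function
$$
F(t_1,\ldots,t_k) \. := \. \de\bigl(t_1P_1 + \cdots + t_kP_k\bigr)
$$
to agree, for all $(t_1,\ldots,t_k)\in\nn^k$, with a polynomial in $t_1,\ldots,t_k$. Concretely $F = \chi\bigl(t_1P_1+\cdots+t_kP_k\bigr) - \vol\bigl(t_1P_1+\cdots+t_kP_k\bigr)$, where the volume term is the classical mixed-volume polynomial and the solid-angle term is a multivariate analogue of Macdonald's solid-angle polynomial, polynomial in the Minkowski parameters by the same valuation argument that underlies the multivariate Ehrhart theorem.

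Next I would dispatch the single-polytope case. For a lattice polytope $P\ssu\rr^3$, Macdonald's reciprocity states that $t\mapsto\chi(tP)$ is a polynomial of degree $3$ with $\chi(-tP) = (-1)^3\chi(tP)$, i.e.\ an odd polynomial; since its leading coefficient is $\vol(P)$, this gives $\chi(tP) = \vol(P)\ts t^3 + c\ts t$ for some constant $c$, and hence
$$
\de(tP) \. = \. \chi(tP) \. - \. t^3\ts\vol(P) \. = \. c\ts t \. = \. t\ts\de(P) \qquad \text{for all } t\in\nn .
$$
Then, fixing $\mathbf n=(n_1,\ldots,n_k)\in\nn^k$ and setting $Q := n_1P_1 + \cdots + n_kP_k$ (again a lattice polytope), I would use $(sn_1)P_1 + \cdots + (sn_k)P_k = sQ$ together with the displayed identity to conclude $F(s\ts\mathbf n) = \de(sQ) = s\ts\de(Q) = s\ts F(\mathbf n)$ for every $s\in\nn$. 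Thus the polynomial $F$ is linear along every ray $\nn\ts\mathbf n$; writing $F = \sum_j F_j$ with $F_j$ homogeneous of degree $j$ and comparing coefficients in $s$ of the identity $\sum_j s^jF_j(\mathbf n) = s\sum_j F_j(\mathbf n)$, we get $F_j(\mathbf n)=0$ for all $j\ne 1$ and all $\mathbf n\in\nn^k$, so that $F = F_1$ is a linear form $F(\mathbf t)=a_1t_1+\cdots+a_kt_k$.

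Finally I would identify the coefficients: evaluating $F(\mathbf t)=\sum_i a_it_i$ at the $i$-th standard basis vector, and using that $0\cdot P_j = \{0\}$ and that Minkowski addition with a single point changes nothing, gives $a_i = \de(P_i)$, which is the asserted identity. (Should one's convention exclude $0$ from $\nn$, I would instead induct on $k$: setting $t_1 = 0$ reduces $F$ to the function attached to $P_2,\ldots,P_k$, fixing $a_2,\ldots,a_k$ by induction, and relabeling the $P_i$ then fixes $a_1$.) The hard part is the very first step, the polynomiality of $F$ in the Minkowski parameters $t_1,\ldots,t_k$: this is exactly where McMullen's valuation machinery is needed and where the argument is genuinely deferred to $\S$\ref{ss:finrem-lemma}; granting it, the remaining steps are formal.
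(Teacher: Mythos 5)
Your proposal is correct and follows essentially the same route as the paper: McMullen's homogeneous decomposition for polynomiality in the Minkowski parameters, Macdonald's reciprocity plus cancellation of the cubic terms to get $\de(tP)=t\,\de(P)$, and then an upgrade to multilinearity. Your ray-restriction step making that upgrade precise is exactly the argument the paper itself invokes (in its alternative proof of Lemma~\ref{l:Dehn}), so the only differences are matters of bookkeeping, such as the convention at $t_i=0$ when identifying the coefficients.
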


\begin{proof}[Proof outline]
Let $P\ssu \rr^3$ be a lattice polytope, and let $t\in \nn$
be a variable.  Both $\chi(tP)$ and
$\vol(tP)$ are cubic polynomials, see~\cite[p.~127]{McMullen} (see also
\cite[Thm~7.9]{BL} and \cite[Thm~2.1]{Jochemko} for surveys
and further references).  Moreover, $\chi(tP)$ and
$\vol(tP)$ are \emph{odd} cubic polynomials, see~\cite[Thm~4.8]{Macdonald}
(see also~\cite{McM-congress}), with the same leading coefficient.
Thus, $\de(P)=\chi(P)-\vol(P)$ is linear.  The polynomiality under
Minkowski addition follows from McMullen's
\emph{homogeneous decomposition}~\cite{McM-congress}
(see also e.g.~\cite[Thm~4.1]{Jochemko}).
Again, the cubic terms cancel, and the same argument proves
multilinearity as in the lemma.
\end{proof}

\subsection{Dehn invariant}
For the clarity of exposition, we follow~\cite[$\S$17]{Pak} (see also~\cite{Bol,Dup}).
Fix an \emph{additive function} \ts $f:\rr\longrightarrow \rr$, s.t.
$f(a+b)=f(a)+f(b)$ for all $a,b\in \rr$. Additive function~$f$ s.t. $f(\pi)=0$ \ts
is called a \emph{Kagan function}, after~\cite{Kag}.

For a convex polytope $P \ssu \rr^3$ and a Kagan function~$f$, denote
$$
D_f(P) \. := \. \sum_{e\in E(P)} \. \ell(e) \ts f(\alpha_e)\ts.
$$
Observe that $D_f$ is a translation invariant valuation,
and that \ts $D_f(cP)=cD_f(P)$.

\begin{lemma}[see $\S$\ref{ss:finrem-lemma}]
Let $P_1,\ldots,P_k$ be convex polytopes in~$\rr^3$, and
let $t_1,\ldots,t_k \in \rr_+$.
Then:
$$
D_f\bigl(t_1 P_1 \.+\. \ldots \.+\. t_kP_k\bigr) \, = \,
t_1\ts D_f(P_1) \.+\. \ldots \.+\. t_k \ts D_f(P_k)\ts,
$$
for every Kagan function~$f$.
\label{l:Dehn}
\end{lemma}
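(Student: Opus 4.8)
The plan is to deduce the lemma from McMullen's polynomiality theorem for translation-invariant valuations, in exact parallel with the proof of Lemma~\ref{l:defect}. The two inputs I would use are precisely the ones recorded just before the statement: $D_f$ is a translation-invariant valuation on the class of all convex polytopes in~$\rr^3$, with values in the abelian group~$\rr$, and it is homogeneous of degree one, $D_f(cP)=c\ts D_f(P)$ for every real $c>0$.

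I would fix convex polytopes $P_1,\dots,P_k\ssu\rr^3$ and set $g(t_1,\dots,t_k):=D_f(t_1P_1+\dots+t_kP_k)$ for $t_i\in\rr_+$. By McMullen's polynomiality theorem in its Minkowski form (the same circle of results invoked for $\chi(tP)$ and $\vol(tP)$ in Lemma~\ref{l:defect}), the function $g$ agrees on $\rr_+^k$ with a polynomial of total degree at most $3$ and with real coefficients. Scaling all arguments simultaneously and using the degree-one homogeneity of $D_f$ gives $g(st_1,\dots,st_k)=D_f\!\bigl(s\ts(t_1P_1+\dots+t_kP_k)\bigr)=s\ts g(t_1,\dots,t_k)$ for all $s>0$; a polynomial with this property must be homogeneous of degree one, so $g(t_1,\dots,t_k)=c_1t_1+\dots+c_kt_k$ for suitable $c_i\in\rr$. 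Specializing $t_i=1$ with the remaining variables set to~$0$ identifies $c_i=D_f(P_i)$, which is the claimed identity.

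A more hands-on alternative is to prove directly that $D_f(P+Q)=D_f(P)+D_f(Q)$ and then combine it with $D_f(cP)=c\ts D_f(P)$ by induction on~$k$; since the homogeneity is available for arbitrary real $c>0$, no rationality or continuity step enters (which is fortunate, as Kagan functions are typically nowhere continuous). The additivity $D_f(P+Q)=D_f(P)+D_f(Q)$ is the statement that the degree-one part of a translation-invariant valuation is Minkowski linear, but it can also be checked by bare hands: classify the edges of $P+Q$ as translates of an edge of one summand capped by a vertex of the other, or as sums of parallel edges of the two summands, and verify that along each great circle $\{u\in S^2:u\perp w\}$ the exterior dihedral angles telescope, the additivity of $f$ together with $f(\pi)=0$ absorbing the contributions at the endpoints of the arcs.

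I do not expect a serious obstacle here: once the two cited facts are granted, the argument is routine valuation bookkeeping. The degree-one homogeneity is immediate from the scaling of edge lengths under dilation together with the invariance of dihedral angles. The one place the Kagan hypothesis is genuinely needed is the input that $D_f$ is a valuation at all: when a cutting plane crosses the relative interior of an edge or a $2$-face of~$P$ it produces auxiliary edges whose two dihedral angles sum to~$\pi$, and $f(\gamma)+f(\pi-\gamma)=f(\pi)=0$ is precisely what makes their contributions cancel. If one preferred to avoid citing McMullen's theory, the combinatorial verification sketched above is self-contained, and I would organize the write-up around it.
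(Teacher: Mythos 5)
Your main argument (McMullen polynomiality of $D_f(t_1P_1+\dots+t_kP_k)$ as a translation-invariant valuation, then degree-one homogeneity along rays forcing the polynomial to be linear, then specializing to identify the coefficients) is exactly the paper's own ``alternative'' proof of Lemma~\ref{l:Dehn}, and your fallback via $D_f(P+Q)=D_f(P)+D_f(Q)$ plus induction matches the paper's first outline. The proposal is correct and essentially the same as the paper's argument, with your remarks on why the Kagan condition makes $D_f$ a valuation and why no continuity is needed being accurate bonuses.
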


\begin{proof}[Proof outline]
For $k=2$, the result follows immediately
from the homogeneous decomposition again and the additivity
of the Dehn invariant under disjoint union. For larger~$k$,
proceed by induction.  

Alternatively, recall that the 
Dehn invariant is a simple, translation-invariant valuation 
(see $\S$\ref{ss:finrem-lemma}), so \ts 
$D_f\bigl(t_1 P_1 \.+\. \ldots \.+\. t_kP_k\bigr)$ \ts 
is a polynomial in \ts $t_1,\ldots, t_k$ \ts of degree 
at most~3. Now, the restriction of that polynomial onto 
any ray from the origin is a linear function, which 
implies that this polynomial is linear. 
The details of both arguments are straightforward.
\end{proof}

\medskip

\section{Counterexample construction}\label{s:counterexample}

Consider the following three tetrahedra:
{\small
$$\aligned
T_1 \. & :=\. \conv \bigl\{(0,0,0),\. (1,1,0),\. (1,0,1),\. (0,1,1) \bigr\},\\
T_2\. & :=\. \conv \bigl\{(0,0,0),\. (2,2,-1),\. (2,-1,2),\. (1,-2,-2) \bigr\},\\
T_3\. & :=\. \conv \bigl\{(0,0,0),\. (2,2,-1), \. (3,0,-3),\. (5,-1,-1) \bigr\}.
\endaligned
$$}

In notation of~\cite[$\S$16]{Pak}, tetrahedron~$T_1$ is \emph{regular}
with edge length~$\sqrt 2$, tetrahedron~$T_2$ is \emph{standard} with
three pairwise orthogonal edges of length~$3$, and $T_3$ is an \emph{orthoscheme}
(also called \emph{path simplex} and \emph{Hill tetrahedron}),
with three edge lengths~$3$. 

Note that six copies of~$T_3$ tile a cube
spanned by vectors \ts $v_1=(2,2,-1)$, \ts $v_2=(1,-2,-2)$, and 
$v_3=(2,-1,2)$ starting at the origin~$O$.  Indeed, these six 
copies correspond to six permutations of \ts $\{v_1,v_2,v_3\}$, 
and are spanned by the paths formed by these vectors.  This 
implies that $D_f(T_3)=0$ for every Kagan function~$f$ defined 
above.

\begin{prop}
Let \ts $P:= \ts 5 \ts T_1 \ts +\ts 12 \ts T_2\ts +\ts 19 \ts T_3$.
Then \ts $\de(P)=0$, and \ts $D_f(P)\ne 0$ \ts for some Kagan function~$f$.
\label{p:ex}
\end{prop}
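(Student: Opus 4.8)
The plan is to reduce both $\de(P)$ and $D_f(P)$ to quantities attached to $T_1,T_2,T_3$ by the additivity lemmas, and then to compute these by hand. By Lemma~\ref{l:defect}, $\de(P)=5\ts\de(T_1)+12\ts\de(T_2)+19\ts\de(T_3)$, and by Lemma~\ref{l:Dehn} (applied with every Kagan function), using $D_f(T_3)=0$ as observed above, $\DD(P)=5\ts\DD(T_1)+12\ts\DD(T_2)$. So it is enough to show that $5\ts\DD(T_1)+12\ts\DD(T_2)\ne 0$ and that $5\ts\de(T_1)+12\ts\de(T_2)+19\ts\de(T_3)=0$.

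\smallskip

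\nin\textbf{Dehn invariant.} Since $T_1$ is regular with edge length $\sqrt2$, all six of its dihedral angles equal $\arccos\tfrac13$, so $\DD(T_1)=6\ts\sqrt2\otimes\arccos\tfrac13$. The edges of $T_2$ at the origin are pairwise orthogonal of length $3$, so $T_2$ is congruent to $\conv\{0,3e_1,3e_2,3e_3\}$; hence it has three dihedral angles $\tfrac{\pi}2$ along edges of length $3$ and three dihedral angles $\arccos\tfrac1{\sqrt3}$ along edges of length $3\sqrt2$. Because $\rr\otimes_{\zz}(\rr/\pi\zz)$ is torsion-free, the terms $3\otimes\tfrac{\pi}2$ vanish, so $\DD(T_2)=9\ts\sqrt2\otimes\arccos\tfrac1{\sqrt3}$. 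From $\cos\bigl(2\arccos\tfrac1{\sqrt3}\bigr)=-\tfrac13=\cos\bigl(\pi-\arccos\tfrac13\bigr)$, with both arguments in $(0,\pi)$, we get $2\arccos\tfrac1{\sqrt3}=\pi-\arccos\tfrac13$, so, using $\sqrt2\otimes\pi=0$,
\[
12\ts\DD(T_2)\;=\;54\ts\sqrt2\otimes\bigl(2\arccos\tfrac1{\sqrt3}\bigr)\;=\;54\ts\sqrt2\otimes\bigl(\pi-\arccos\tfrac13\bigr)\;=\;-\,54\ts\sqrt2\otimes\arccos\tfrac13\;=\;-\,9\ts\DD(T_1)\ts.
\]
Therefore $\DD(P)=5\ts\DD(T_1)-9\ts\DD(T_1)=-\,4\ts\DD(T_1)$, which is nonzero because the regular tetrahedron has nonzero Dehn invariant---equivalently $\tfrac1\pi\arccos\tfrac13\notin\qqq$, the classical resolution of Hilbert's third problem. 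Hence $D_f(P)\ne 0$ for some Kagan function $f$.

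\smallskip

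\nin\textbf{Volume defect.} Write $c:=\tfrac1\pi\arccos\tfrac13$, an irrational number by the same fact. Each $\de(T_i)=\chi(T_i)-\vol(T_i)$ is found by listing $T_i\cap\zz^3$ and summing the solid angles $\om_{T_i}(x)$, which equal $1$ at interior points, $\tfrac12$ at relative interiors of facets, $\tfrac{\al(e)}{2\pi}$ at relative interiors of an edge $e$, and the usual (normalised) solid angle at vertices. One checks the following. $\vol(T_1)=\tfrac13$, and $T_1\cap\zz^3$ consists exactly of its four vertices, each of solid angle $\tfrac{3\arccos(1/3)-\pi}{4\pi}$, so $\de(T_1)=3c-\tfrac43$. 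Next $\vol(T_2)=\tfrac92$, and $T_2\cap\zz^3$ consists of its $4$ vertices, the $2$ lattice points interior to the edge $[(2,2,-1),(2,-1,2)]$ (which has dihedral angle $\arccos\tfrac1{\sqrt3}$), $2$ points in relative interiors of facets, and $2$ interior points; summing and again using $2\arccos\tfrac1{\sqrt3}=\pi-\arccos\tfrac13$ gives $\chi(T_2)=4-\tfrac54 c$, so $\de(T_2)=-\tfrac12-\tfrac54 c$. Finally $\vol(T_3)=\tfrac92$; the tetrahedron $T_3$ is the orthoscheme six copies of which tile the cube on $v_1,v_2,v_3$, with dihedral angles $\tfrac{\pi}4,\tfrac{\pi}4,\tfrac{\pi}3,\tfrac{\pi}2,\tfrac{\pi}2,\tfrac{\pi}2$ and vertex solid angles $\tfrac1{48},\tfrac1{48},\tfrac1{16},\tfrac1{16}$ (these sum to $\tfrac16$), and $T_3\cap\zz^3$ consists of its $4$ vertices, the $4$ lattice points interior to its two edges of dihedral angle $\tfrac{\pi}2$, no lattice point in the relative interior of a facet, and $4$ interior points, so $\chi(T_3)=\tfrac16+1+4=\tfrac{31}6$ and $\de(T_3)=\tfrac{31}6-\tfrac92=\tfrac23$. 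Substituting,
\[
5\ts\de(T_1)+12\ts\de(T_2)+19\ts\de(T_3)\;=\;5\bigl(3c-\tfrac43\bigr)+12\bigl(-\tfrac12-\tfrac54 c\bigr)+19\cdot\tfrac23\;=\;(15-15)\ts c+\bigl(-\tfrac{20}3-6+\tfrac{38}3\bigr)\;=\;0\ts.
\]
The cancellation of the $c$-terms is exactly what forces the first two multiplicities to be proportional to $5$ and $12$; the rational parts then pin the last multiplicity down to $19$.

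\smallskip

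The main obstacle is the two lattice-point enumerations, for $T_2$ and especially for $T_3$: they are finite but unforgiving, since overlooking a single interior lattice point of $T_3$ changes $\chi(T_3)$ and destroys the identity $\de(P)=0$. A safe route is to slice each $T_i$ by the planes $\{x=\mathrm{const}\}$ and solve, on each slice, the four linear inequalities defining $T_i$; one must also verify the dihedral angles and vertex solid angles of $T_2$ and $T_3$ against their congruent standard models (the corner simplex $\conv\{0,3e_1,3e_2,3e_3\}$ and the cube orthoscheme).
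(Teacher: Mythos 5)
Your proposal is correct and follows essentially the same route as the paper: reduce $\de(P)$ and the Dehn invariant of $P$ to those of $T_1,T_2,T_3$ via Lemmas~\ref{l:defect} and~\ref{l:Dehn}, then use $\de(T_1)=\frac{3\alpha}{\pi}-\frac43$, $\de(T_2)=-\frac{5\alpha}{4\pi}-\frac12$, $\de(T_3)=\frac23$ and $2\arccos\frac{1}{\sqrt3}=\pi-\alpha$ to get $\de(P)=0$ and $D_f(P)=-24\sqrt2\,f(\alpha)\ne0$; your explicit lattice-point enumerations and solid-angle values agree with the paper's Table~\ref{tab:values} (the only slip is cosmetic: $T_3$ has three edges of dihedral angle $\frac\pi2$, of which two carry the four interior lattice points).
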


\begin{proof}
Let \ts $\alpha=\arccos\frac13$\ts, and recall that $\frac{\al}{\pi}\notin \qqq$,
see e.g.~\cite[$\S$41.3]{Pak} and~\cite{Bol}. Thus, there is a Kagan function~$f$
which satisfies \ts $f(\al)\ne 0$, and, moreover, \ts $f(\al) \notin \ov\qqq$,
see~\cite[Ex.~17.8]{Pak}.

Observe that all dihedral angles of $T_1$ are equal to~$\alpha$.
Dihedral angles of $T_2$ are equal to $\frac{\pi}{2}$ for the
three edges at the origin, and to \ts
$\beta:=\arccos\frac{\sqrt 3}{3}=\frac{\pi -\alpha}{2}$ \ts
for the three other edges. Finally, all dihedral angles of
$T_3$ are rational multiples of~$\pi$.
The values of the volume defect and the Dehn invariant
for all three tetrahedra are given in Table~\ref{tab:values} below.

{\Large
\begin{table}[hbt]
\begin{center}
\begin{tabular}{|r|c|c|c|}
\hline
& {\large $T_1$} & {\large $T_2$} & {\large $T_3$} \\ \hline
{\large $\de(\cdot)$} & {\ns ${
 \frac{3\alpha}{\pi}-\frac43}$ } & {\ns ${
-\frac{5\alpha}{4\pi}-\frac12}$ } & {\ns \ts $\frac23$} \\
{\large \, $D_f(\cdot)$} & {\small $6\sqrt 2$}{\ns \ts $f(\alpha)$ } & {\ns ${
-\frac{9}{\sqrt 2}\ts f(\alpha)}$ } &{\ns 0}\\\hline
\end{tabular}
\end{center}
\caption{Values of the volume defect and the Dehn invariant.}\label{tab:values}
\end{table}
}

Using values from the table, Lemmas~\ref{l:defect} and~\ref{l:Dehn} imply:
$$\aligned
\de(P) \, &= \, 5 \ts \de(T_1) \. + \. 12 \ts \de(T_2)\. + \. 19 \ts \de(T_3)\. = \. 0\ts,\\
D_f(P)\, & = \, 5 \ts D_f(T_1) \. + \. 12 \ts D_f(T_2)\. + \. 19 \ts D_f(T_3)\. = \. -24 \ts \sqrt 2 \ts f(\alpha) \. \ne \. 0\ts,
\endaligned
$$
as desired.\footnote{See~\cite[$\S$16.4]{Pak} for the
algebraic approach, which implies \ts $D_f(P)\ne 0$ \ts without computing
dihedral angles directly.}
\end{proof}

\begin{proof}[Proof of Theorem~\ref{t:main}]
The polytope $P\ssu \rr^3$ constructed in the proof of
Proposition~\ref{p:ex} is concrete, but has a non-zero Dehn invariant.
Thus, by Theorem~\ref{t:multitile}, it cannot multitile the space.
This proves the first part of the theorem.

For the second part, take a lattice zonotope $Q\ssu \rr^3$ with
at least $N$ vertices.  From the results in~$\S$\ref{s:back}, we have
$\de(Q)=0$, and $D_f(Q)=0$ for all Kagan functions~$f$. By Lemma~\ref{l:defect},
we have \ts $\de(P+Q)=\de(P)=0$, so $(P+Q)$ is concrete.  On the other hand,
by Lemma~\ref{l:Dehn}, we have \ts $D_f(P+Q)=D_f(P)\ne 0$. Thus, by
Theorem~\ref{t:multitile}, polytope $(P+Q)$ cannot multitile~$\rr^3$.
Finally, observe that $(P+Q)$ has at least $N$ vertices,
see e.g.~\cite[Prop.~7.12]{Zie}. This completes the proof.
\end{proof}

\medskip

\section{Final remarks and open problems}

\subsection{} Curiously, the volume defect can be both very
small or very large for general lattice polytopes in $\rr^3$.
Indeed, consider the following \emph{wedge tetrahedron}
and \emph{flat square pyramid}:
{\small
$$\aligned
W_n \. & :=\. \conv \bigl\{(0,0,0),\. (1,1,0),\. (1,0,n),\. (0,1,n) \bigr\},\\
V_n \. & :=\. \conv \bigl\{(0,0,0),\. (n,0,0),\. (0,n,0),\. (n,n,0),\. (0,0,1) \bigr\}.
\endaligned
$$
}
As $n\to \infty$, we have:
$$\chi(W_n) \. = \. \Theta\left(\frac{1}{n}\right)\,, \qquad
\vol(W_n) \. = \. \frac{n}{3}\,, \qquad \text{and} \qquad
\de(W_n) \. \sim \. -\frac{n}{3}\,.
$$
On the other hand,
$$
\chi(V_n) \. = \. \frac{n^2}{2} \. - \. O(n)\,, \qquad
\vol(V_n) \. = \. \frac{n^2}{3}\,, \qquad \text{and} \qquad
\de(V_n) \. \sim \. \frac{n^2}{6}\,.
$$

\subsection{} \label{ss:finrem-lemma}
Lemma~\ref{l:defect} follows from a more general
result in the literature that every translation invariant
valuation on $\rr^d$ which is homogeneous of degree one is Minkowski
additive (see~\cite[Rem.~6.3.3]{Sch} and~\cite[Cor.~32]{BL2}).
We include a short proof outline both for simplicity, to remain
as much self-contained as possible, and as a brief guide to
the literature.  While Lemma~\ref{l:Dehn} is very natural,
we could not find it stated in this form.  As we explain above,
its proof follows along steps similar to the proof of
Lemma~\ref{l:defect}.

\subsection{} \label{ss:finrem-asy}
The asymptotic argument in the proof of Theorem~\ref{t:multitile}
can also be applied in $\rr^2$, where it is traditionally used to show that
the plane cannot be tiled with congruent convex $n$-gons, for $n\ge 7$.
See~\cite{Ale,Niv} for early versions of this result.  See also~\cite[Thm~D]{KPP}
for an advanced version of this argument, proving that strictly acute tetrahedra
cannot tile~$\rr^4$, and for further references.

\subsection{}
One can ask if the results of this paper can be further extended.
First, we can always extend Theorem~\ref{t:main} to higher
dimensions $d\ge 4$.  By the argument in the proof of
Theorem~\ref{t:multitile}, every $P\ssu \rr^d$ which multitiles~$\rr^d$
has zero \emph{Hadwiger} (generalized Dehn) {\em invariants}~\cite{LM}.
Take an orthogonal prism $P \times [0,1]$ over the polytope $P\ssu \rr^3$ as in
Proposition~\ref{p:ex}.  The dihedral angles are either $\pi/2$ or the same
as in~$P$. Thus the corresponding codim-2 Hadwiger invariant is non-zero,
giving a counterexample in~$\rr^4$.  Proceed by induction; the details
are straightforward.

Going one step further, we say that a lattice polytope $P\ssu\rr^d$ is
\emph{super concrete}, if it is concrete and \emph{scissors congruent}
to a $d$-cube.  For \ts $d=3,\ts 4$, by the \emph{Sydler--Jessen theorem}
this is equivalent to zero Dehn invariant~\cite{Bol,Dup}.  For $d\ge 5$,
scissors congruence with a $d$-cube implies and is conjecturally equivalent
to zero Hadwiger invariants, see e.g.~\cite{Zak}.  Also, every $P\ssu \rr^d$
which multitiles~$\rr^d$ with translations must be super concrete, see~\cite{LM}.
So in the spirit of Conjecture~\ref{conj:BCRT}, one can ask whether
 all super concrete lattice polytopes $P\ssu \rr^d$ can multitile the space.

We conjecture that the answer is negative for all $d\ge 3$.
For example, for $d\ge 4$, the local structure of cones around
a vertex can be constrained by a \emph{spherical Dehn invariant},
see e.g.~\cite{Dup}.  In principle, the concrete assumption is too weak
and can allow ``bad cones'' which would locally not multitile the
sphere $\SS^{d-1}$.  It would be interesting to make this precise.
The above problem is even more interesting in~$\rr^3$. In principle,
the cones around vertices can all have nontrivial geometry generating
a non-discrete group of symmetries, cf.~\cite{MM}.  Again, it would be
interesting to give an explicit construction.

\subsection{}\label{ss:finrem-ortho}
Let us mention that the proof of Proposition~\ref{p:ex} hinges on the following
curious geometric property: the orthoscheme $T_3$ tiles the lattice
cube and thus the space, yet has $\de(T_3)\ne 0$.  In particular, this shows that
Theorem~\ref{t:multitile} cannot be extended to allow reflections.
This non-zero volume defect of $T_3$ has to do with the fact that the remaining
five orthoschemes in the tiling of the cube are obtained from~$T_3$ by
reflections which do not preserve the lattice.
Although congruent to~$T_3$, these reflected orthoschemes have both negative
and positive volume defect, giving zero in total for the lattice cube.

Note that the (primitive) lattice cubes which arise in the construction,
correspond (up to parallel translation) to rational orthogonal matrices
$M\in O(3,\rr)$.  These matrices are enumerated in~\cite{Cre}.  We conjecture
that the corresponding orthoschemes have a nonzero volume defect with probability
at least \ts $\ve>0$, as the cube edge length $\ell\to \infty$.
This would give further examples of polytopes as in
the proposition, all with a bounded number of vertices.

\vskip.6cm

\subsection*{Acknowledgments.}
The authors are grateful to Sasha Barvinok, Alexey Glazyrin
and Sinai Robins for interesting remarks.  Special thanks to
Katharina Jochemko and Monika Ludwig for help with the
valuation literature.  The paper was finished when the
second author was on sabbatical at the Mittag-Leffler
Institute; we are grateful for the hospitality.
The second author was partially supported by the NSF.

\vskip1.1cm





\begin{thebibliography}{999999}

\bibitem[Ale]{Ale}
A.~D.~Alexandrov, On the coverings of the plane (in Russian),
\emph{Mat.\ Sbornik}~\textbf{2} (1937), 307--318.

\bibitem[Bar]{Bar}
A.~Barvinok, \emph{Integer points in polyhedra}, EMS, Z\"urich, 2008, 191~pp.


\bibitem[BP]{BP}
A.~Barvinok and J.~E.~Pommersheim,
An algorithmic theory of lattice points in polyhedra, in
\emph{New perspectives in algebraic combinatorics}, Cambridge Univ.\ Press,
Cambridge, UK, 1999, 91--147.

\bibitem[BR]{BR}
M.~Beck and S.~Robins, \emph{Computing the continuous discretely} (Second ed.),
Springer, New York, 2015, 285~pp.

\bibitem[Bol]{Bol}
V.~G. Boltyanskii, {\em Hilbert's Third Problem},
John Wiley, New York, 1978.

\bibitem[BL1]{BL} K.~J.~ B\"or\"oczky and M.~Ludwig,
Valuations on lattice polytopes, in
\emph{Lecture Notes in Math.}~\textbf{2177}, Springer, Cham, 2017, 213--234.

\bibitem[BL2]{BL2} K.~J.~ B\"or\"oczky and M.~Ludwig,
Minkowski valuations on lattice polytopes,
\emph{Jour.\ EMS}~\textbf{21} (2019), 163--197.


\bibitem[BCRT]{BCRT}
L.~Brandolini, L.~Colzani, S.~Robins and G.~Travaglini,
Convergence of multiple Fourier series and Pick's theorem, 6~pp.;
{\tt arXiv:1909.03435}.

\bibitem[Cre]{Cre}
J.~Cremona, Letter to the Editor,
\emph{Amer.\ Math.\ Monthly}~\textbf{94} (1987), 757--758; available at \\
\href{https://tinyurl.com/umoouee}{https://tinyurl.com/umoouee}

\bibitem[Dup]{Dup}
J.~L.~Dupont, \emph{Scissors congruences, group homology
and characteristic classes},
World Sci., River Edge, NJ, 2001, 168~pp.



\bibitem[GKRS]{GKRS}
N.~Gravin, M.~N.~Kolountzakis, S.~Robins and D.~Shiryaev,
Structure results for multiple tilings in~3D,
\emph{Discrete Comput.\ Geom.}~\textbf{50} (2013), 1033--1050.

\bibitem[GRS]{GRS}
N.~Gravin, S.~Robins and D.~Shiryaev,
Translational tilings by a polytope, with multiplicity,
\emph{Combinatorica}~\textbf{32} (2012), 629--649.

\bibitem[GS]{GS}
B.~Gr\"unbaum and G.~C.~Shephard, \emph{Tilings and patterns}, Freeman,
New York, 1987, 700~pp.

\bibitem[Joc]{Jochemko}
K.~Jochemko,
A brief introduction to valuations on lattice polytopes, in
\emph{Algebraic and geometric combinatorics on lattice polytopes},
World Sci., Hackensack, NJ, 2019, 38--55.


\bibitem[Kag]{Kag}
V.~F. Kagan, \"{U}ber die Transformation der Polyeder (in German),
\emph{Math.\ Ann.}~\textbf{57} (1903), 421--424.

\bibitem[KPP]{KPP}
E.~Kopczy\'nski, I.~Pak and P.~Przytycki,
Acute triangulations of polyhedra and $\rr^N$,
\emph{Combinatorica}~\textbf{32} (2012), 85--110.

\bibitem[LM]{LM}
J.~C. Lagarias  and D. Moews,
Polytopes that fill~$\rr\sp n$ and scissors congruence,
\emph{Discrete  Comput.\ Geom.}~{\bf 13} (1995),  573--583;
Acknowledgment of priority, ibid.~{\bf 14} (1995), 359--360.

\bibitem[MR]{MR}
F.~C.~Machado and S.~Robins,
Coefficients of the solid angle and Ehrhart quasi-polynomials, 35~pp.; {\tt arXiv:} {\tt 1912.08017}.

\bibitem[Mac]{Macdonald} I.~G.~Macdonald,
Polynomials associated with finite cell-complexes,
\emph{J.~London Math.\ Soc.}~\textbf{4} (1971), 181--192.

\bibitem[MM]{MM}
G.~A.~Margulis and S.~Mozes,
Aperiodic tilings of the hyperbolic plane by convex polygons,
\emph{Israel J.\ Math.}~\textbf{107} (1998), 319--325.

\bibitem[M1]{McM-congress}
P.~McMullen,
Metrical and combinatorial properties of convex polytopes,
in \emph{Proc.\ ICM (Vancouver, BC)}, Vol.~1, 1974, 491--495.

\bibitem[M2]{McMullen} P.~McMullen,
Valuations and Euler-type relations on certain classes of convex polytopes,
\emph{Proc.\ London Math.\ Soc.}~\textbf{35} (1977), 113--135.


\bibitem[Niv]{Niv}
I.~Niven, Convex polygons that cannot tile the plane,
\emph{Amer.\ Math.\ Monthly}~\textbf{85}
(1978), 785--792.

\bibitem[Pak]{Pak}
I.~Pak, \emph{Lectures on discrete and polyhedral geometry},
monograph draft (2009), 442~pp.; available at \\
\href{http://www.math.ucla.edu/~pak/book.htm}{http://www.math.ucla.edu/\~{}pak/book.htm}

\bibitem[Sch]{Sch}
R.~Schneider, \emph{Convex bodies} (Second ed.), Cambridge Univ.\ Press, Cambridge, UK, 2014, 736~pp.


\bibitem[Zak]{Zak}
I.~Zakharevich,
Perspectives on scissors congruence,
\emph{Bull.\ AMS}~\textbf{53} (2016), 269--294.

\bibitem[Zie]{Zie}
G.~M. Ziegler, \emph{Lectures on polytopes}, Springer,
New York, 1995, 370~pp.

\end{thebibliography}
\end{document}